\documentclass[11pt,twoside]{article}

\usepackage[T1]{fontenc}
\usepackage{lmodern}
\usepackage{amsmath,amssymb,amsthm,mathtools}
\usepackage{microtype}
\usepackage{geometry}
\usepackage{fancyhdr}
\usepackage{titlesec}
\usepackage{xcolor}
\usepackage[hypertexnames=false]{hyperref}
\usepackage{cleveref}

\definecolor{citationgreen}{RGB}{0,120,45}
\hypersetup{
  colorlinks=true,
  linkcolor=black,
  citecolor=citationgreen,
  urlcolor=citationgreen,
  pdfauthor={Guorong Gao and Mingze Li},
  pdftitle={On Non-Uniform Baranyai Factorizations},
  pdfsubject={An asymptotic classification of one-factorizations of fixed unions of Boolean-lattice levels},
  pdfkeywords={Baranyai theorem, nonuniform hypergraph, one-factorization, unions of levels, gcd peeling}
}

\titleformat{\section}[block]
  {\centering\normalfont\large\scshape}
  {\S\thesection.}{0.55em}{}
\titlespacing*{\section}{0pt}{3.0ex plus 1ex minus .2ex}{1.8ex plus .2ex}

\titleformat{\subsection}[block]
  {\normalfont\bfseries}
  {\thesubsection.}{0.5em}{}
\titlespacing*{\subsection}{0pt}{2.4ex plus .8ex minus .2ex}{1.2ex plus .2ex}

\newtheoremstyle{paperplain}
  {6pt}{6pt}{\itshape}{} {\bfseries}{.}{0.5em}{}
\theoremstyle{paperplain}
\newtheorem{theorem}{Theorem}[section]
\newtheorem{proposition}[theorem]{Proposition}
\newtheorem{lemma}[theorem]{Lemma}
\newtheorem{corollary}[theorem]{Corollary}
\newtheorem{conjecture}[theorem]{Conjecture}

\theoremstyle{definition}
\newtheorem{definition}[theorem]{Definition}

\numberwithin{equation}{section}

\newcommand{\Z}{\mathbb{Z}}

\newcommand{\T}{\mathcal{T}}

\newcommand{\Layers}[2]{\binom{[#1]}{#2}}

\let\leq\leqslant

\let\le\leqslant
\let\ge\geqslant

\fancypagestyle{plain}{
  \fancyhf{}
  \fancyfoot[C]{\small\thepage}

}

\makeatletter
\renewcommand{\maketitle}{%
  \begin{center}
    \vspace*{0.25in}
    {\Large\bfseries\@title\par}
    \vspace{1.6em}
    {\normalsize\@author\par}
  \end{center}
  \vspace{1.5em}
  \thispagestyle{plain}
}
\makeatother

\newenvironment{paperabstract}
  {\begin{list}{}{\leftmargin=0.42in\rightmargin=0.42in}
   \item[]\small\noindent\textsc{Abstract.}\ }
  {\end{list}\vspace{1.0em}}

\title{On Non-Uniform Baranyai Factorizations}
\author{Guorong Gao, Mingze Li}
\date{}

\begin{document}

\maketitle

\begin{paperabstract}
For a fixed nonempty finite set $L$ of positive integers and $k=\max L\ge2$, let $\Layers{n}{L}=\bigcup_{\ell\in L}\binom{[n]}{\ell}$ denote the non-uniform hypergraph whose edge set consists of all $\ell$-subsets of $[n]$ for $\ell\in L$. 
A $1$-factor of a hypergraph $H$ is a collection of pairwise disjoint edges whose union is $V(H)$. We say $H$ is $1$-factorable if $E(H)$ can be partitioned into $1$-factors. 
In this paper, we study the 1-factorability of $\Layers{n}{L}$. 
Earlier work of He, Huang, and Ma shows that, for all sufficiently large $n$, $1$-factorability can occur only in the residue classes $0$ and $-1$ modulo $k$: it always occurs in class $0$, whereas it occurs in class $-1$ only if $k-1\in L$.  We determine exactly when $1$-factorability occurs in this exceptional class. More precisely, we define a finite gcd-peeling algorithm on $L\setminus\{k-1,k\}$ and prove that $\Layers{qk-1}{L}$ is $1$-factorable if and only if the algorithm accepts. As a consequence, our result provides a criterion for the $1$-factorability of $\Layers{n}{L}$ and confirms the conjecture posed by He, Huang, and Ma.
\end{paperabstract}

\section{Introduction}

A hypergraph $H=(V,E)$ consists of a finite vertex set $V$ and a family $E$ of
nonempty subsets of $V$, called edges.  A \emph{$1$-factor} of $H$ is a
collection of pairwise disjoint edges whose union is $V$, and a
\emph{$1$-factorization} is a partition of $E$ into $1$-factors.  Equivalently,
the edges are colored so that every color class is a perfect matching.

For positive integers $k\le n$, the complete $k$-uniform hypergraph on $[n]$
has edge set $\binom{[n]}{k}$.  The divisibility condition $k\mid n$ is necessary for a $1$-factorization, and Baranyai's theorem says that it is also
sufficient \cite{Baranyai}.  In design-theoretic language, the $k$-subsets can
therefore be partitioned into parallel classes; see also
\cite{BrouwerSchrijver,Cameron} for the broader setting of parallelisms and
uniform hypergraph decompositions.

The natural nonuniform version fixes a nonempty finite set $L$ of positive
integers, puts $k:=\max L$, and considers the union of levels
\[
  \Layers{n}{L}:=\bigcup_{\ell\in L}\binom{[n]}{\ell}.
\]
This problem has a substantial history.  Berge and Johnson~\cite{BergeJohnson} used linear and integer programming to study hereditary closures of complete uniform hypergraphs; Johnson~\cite{Johnson} treated further closure cases. Brouwer and Tijdeman~\cite{BrouwerTijdeman} completely classified the initial segment $L=[k]$ and obtained exact results for several further families of unions of levels. He, Huang, and Ma~\cite{HeHuangMa} later gave a network-flow formulation, revisited the initial-segment case, and established asymptotic restrictions and a classification conjecture for arbitrary fixed $L$. Jiang, Sun, and Zhang~\cite{JiangSunZhang} studied multiplicity versions of the nonuniform problem.

\vfill
\noindent\rule{2.8in}{0.4pt}\par
\smallskip
{\footnotesize
\textit{2020 Mathematics Subject Classification.}
Primary 05C65; Secondary 05C70.\quad
\textit{Key words and phrases.}
Baranyai's theorem, nonuniform hypergraph, $1$-factorization, unions of levels,
gcd peeling.\par}
\newpage

The realization principle turns the factorization problem into an integer
system indexed by partition types.  This reduction was already recorded by
Brouwer and Tijdeman \cite[p.~242]{BrouwerTijdeman}; we use the network-flow
formulation of He, Huang, and Ma \cite[Theorem~2.2]{HeHuangMa}.
Bahmanian's detachment theorem gives a general realization result for
nonuniform complete multihypergraphs with prescribed regular degrees and
edge counts of each size \cite[Theorem~7.2]{BahmanianI}.  His recent sequel
also controls connectivity and multiplicities within each factor
\cite[Theorem~3.1]{BahmanianII}.  These results provide a general framework
for realizing feasible integer data.  The question addressed here is to
determine precisely when such data exist for $\Layers{qk-1}{L}$, by an
explicit arithmetic criterion depending only on $L$.

Recent work also studies extensions of prescribed factorizations.
Bahmanian \cite{BahmanianII} and Wang, Feng, and Wang \cite{WangFengWang}
give embedding results for regular colorings of complete uniform
multihypergraphs.  Bahmanian's work on symmetric layer-rainbow Latin cubes
\cite{BahmanianCubes} treats a specific nonuniform embedding problem with
edge sizes $1,2,3$ and respective multiplicities $1,3,2$.  These embedding
questions complement the existence classification for arbitrary fixed
sets $L$ considered here.

For arbitrary fixed $L$ with $k\ge2$, the asymptotic results of
He, Huang, and Ma imply that only the residue
classes $0$ and $-1$ modulo $k$ can occur, that the class $0$ always occurs,
and that the class $-1$ cannot occur unless $k-1\in L$.  They consequently
posed the following classification problem \cite[Conjecture~4.4]{HeHuangMa}.

\begin{conjecture}[He--Huang--Ma \cite{HeHuangMa}]
\label{conj:hhm}
Let $L$ be a fixed nonempty finite set of positive integers and let
$k=\max L\ge2$.  Exactly one of the following holds for all sufficiently
large $n$:
\begin{enumerate}
  \item[(i)] $\Layers{n}{L}$ is $1$-factorable if and only if
        $n\equiv0\pmod{k}$;
  \item[(ii)] $\Layers{n}{L}$ is $1$-factorable if and only if
        $n\equiv0$ or $-1\pmod{k}$.
\end{enumerate}
\end{conjecture}

The restriction $k\ge2$ only removes a linguistic degeneracy: when $k=1$ the
two displayed residue sets coincide.  

We first isolate the conclusions of He, Huang, and Ma that reduce
Conjecture~\ref{conj:hhm} to the single case treated here.

\begin{proposition}[He--Huang--Ma {\cite[Theorems~4.1--4.3]{HeHuangMa}}]
\label{prop:old-reduction}
For every fixed nonempty finite set $L$ with $k=\max L\ge2$, all sufficiently large
$n$ satisfy the following.
\begin{enumerate}
  \item If $n\equiv0\pmod{k}$, then $\Layers{n}{L}$ is $1$-factorable.
  \item If $\Layers{n}{L}$ is $1$-factorable, then
        $n\equiv0$ or $-1\pmod{k}$.
  \item If $n\equiv-1\pmod{k}$ and $k-1\notin L$, then
        $\Layers{n}{L}$ is not $1$-factorable.
\end{enumerate}
\end{proposition}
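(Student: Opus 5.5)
The three items are results of He, Huang, and Ma, so the plan is to reconstruct their arguments from the realization principle (\cite[Theorem~2.2]{HeHuangMa}). A $1$-factorization of $\Layers{n}{L}$ exists if and only if there are nonnegative integers $x_\lambda$, indexed by partitions $\lambda$ of $n$ with all parts in $L$, such that two conditions hold. First, for every $\ell\in L$, $\sum_\lambda m_\ell(\lambda)x_\lambda=\binom{n}{\ell}$, where $m_\ell(\lambda)$ is the multiplicity of $\ell$ in $\lambda$. Second, a flow or degree condition holds, which Baranyai-type rounding shows is automatic once these counts are consistent. Every claim then becomes a statement about this integer system.

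For item 1, with $n=qk$, I would use Baranyai's theorem level by level. Since $k\mid n$, the top level $\binom{[n]}{k}$ splits into perfect matchings. For each smaller $\ell\in L$, I would group the $\ell$-sets so they can be absorbed: count factors of the form ``some $\ell$-sets together with $k$-sets'', and choose integer solutions of the system. The top level gives a large supply of ``$k$-filler'', so the needed $x_\lambda$ exist once $n$ is large. The plan is an explicit solution of the system: use types $\lambda=(\ell^{a},k^{b})$ with $a\ell+bk=n$, where $a$ is a multiple of $k/\gcd(\ell,k)$. The integrality constraints are then divisibility conditions on $\binom{n}{\ell}$, and these hold or can be fixed using a few mixed types, since $\gcd$ conditions involving $\binom{qk}{\ell}$ behave well.

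For item 2, I would count. The number of factors is at least $\binom{n}{k}/\lfloor n/k\rfloor$, because each factor contains at most $\lfloor n/k\rfloor$ sets of size $k$. Each factor uses at least $\lceil n/k\rceil$ edges, so the total number of edges is at least about $\binom{n}{k}\lceil n/k\rceil/\lfloor n/k\rfloor$. If $n\equiv r\pmod k$ with $r\ne 0$, then there are leftover vertices $r$ in every factor that is mostly made of $k$-sets. Write $n=qk+r$. A factor with $b$ sets of size $k$ must cover $n-bk$ vertices by smaller sets, and it needs at least $(n-bk)/(k-1)$ of them. Since the $k$-sets dominate the edge count, namely $\binom{n}{k}\gg\sum_{\ell<k}\binom{n}{\ell}$ by a factor of order $n$, most factors must have $b=q$. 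Each of those uses at least one smaller edge to cover $r$ vertices, so the number of small edges used is at least about $\binom{n}{k}/q\asymp n^{k-1}$. When $r\le k-2$, the leftover $r$ needs a set of size at most $r$, and $\binom{n}{\le r}=O(n^{k-2})$, which is a contradiction. When $r=k-1$, no contradiction arises. Item 3 is the same count: for $r=k-1$ with $k-1\notin L$, the leftover $k-1$ vertices need at least two small sets, or else fewer $k$-sets are used. So again about $\binom nk/q$ factors each consume sets from levels below $k-1$, whose total count is $O(n^{k-2})$.

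The main obstacle is making the inequality rigorous: factors with $b<q$ also use many small edges, so we need a weighted count. Assign weight $k-|e|$ to each edge and sum over a factor. A factor with $b$ large sets has total deficit $\sum(k-|e|)\ge k(\#\text{edges})-n$, and I would bound this from below by $k-r$ times something positive. Comparing the total deficit $\sum_{\ell<k}(k-\ell)\binom n\ell$ with the required count of factors should then give the contradiction. The delicate point is item 1's explicit integral solution, which I would check on the system directly.
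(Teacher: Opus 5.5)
The paper does not prove this proposition. It quotes it from He--Huang--Ma and uses it only as a black box, so there is no internal argument to compare against. Judged on its own, your reconstruction is correct in outline, and both parts close more easily than you expect.

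\textbf{Item~1.} No mixed types are needed. The identity $\ell\binom{qk}{\ell}=qk\binom{qk-1}{\ell-1}$ gives $k\mid\ell\binom{qk}{\ell}$, hence $k/\gcd(k,\ell)\mid\binom{qk}{\ell}$. So $\binom{n}{\ell}\gcd(k,\ell)/k$ copies of the type $\ell^{k/\gcd(k,\ell)}k^{\,q-\ell/\gcd(k,\ell)}$ use up level $\ell$ exactly.

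By the weight identity, the number of $k$-sets left over equals $q$ times the quantity ($\sum_{\ell\in L}\binom{n-1}{\ell-1}$ minus the number of types used so far). This count is nonnegative for large $n$: level $k-1$ consumes only about $\binom nk/k$ of the $k$-sets, and the lower levels consume $O(n^{k-1})$. Fill the remainder with copies of $k^q$. Your opening idea of factoring the top level separately by Baranyai plays no role.

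\textbf{Items~2--3.} Your claim that most factors have $b=q$ is not what averaging gives. In general it guarantees only a proportion of roughly $(r+1)/k$; that would still suffice, but your deficit count makes the whole $b=q$ discussion unnecessary, and it is already a complete proof.
\begin{itemize}
\item A factor with $m$ edges has deficit exactly $km-n$. This is positive and $\equiv -r\pmod k$, hence at least $k-r$.
\item The number of factors is forced, as in \eqref{eq:total-mass}, to be $\sum_{\ell\in L}\binom{n-1}{\ell-1}\ge\binom{n-1}{k-1}$.
\item Summing over factors gives $\sum_{\ell\in L}(k-\ell)\binom n\ell\ge(k-r)\binom{n-1}{k-1}$.
\end{itemize}
If $k-1\notin L$, the left side is $O(n^{k-2})$ while the right side has order $n^{k-1}$. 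If $k-1\in L$ and $1\le r\le k-2$, the left side is $\binom{n}{k-1}+O(n^{k-2})$ while the right side is at least $2\binom{n}{k-1}-O(n^{k-2})$. Either way the inequality fails for large $n$.

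This argument uses only type counts. It needs no realization theorem, and it excludes even nonnegative real multiplicities. It is the same mechanism as the paper's own rejection proof, Theorem~\ref{thm:proof2}: a weight on types that is bounded below pointwise, integrated against the forced total mass.
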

The unresolved case of
Conjecture~\ref{conj:hhm} is therefore $n=qk-1$ and $\{k-1,k\}\subseteq L$. Our main result decides this case by the following \emph{gcd-peeling algorithm}.

\begin{definition}[The gcd-peeling algorithm]
\label{def:peeling}
Given $M\subseteq[k-2]$, initialize
\[
  R:=M,\qquad C:=\varnothing,\qquad B:=\varnothing,\qquad m:=k.
\]
Repeat the following steps.
\begin{enumerate}
  \item If $R=\varnothing$, return \textsc{accept}.
  \item Put $h:=\max R$ and $d:=\gcd(m,h)$.
  \item If $d=1$, return \textsc{accept}.
  \item If $d>1$ and $h-1\notin R$, return \textsc{reject}.
  \item Otherwise record the bridge pair $(h,h-1)$ and replace
        \begin{equation}
          R\leftarrow R\setminus\{h,h-1\},\quad
          C\leftarrow C\cup\{h\},\quad
          B\leftarrow B\cup\{h-1\},\quad
          m\leftarrow d.
          \label{eq:peeling-update}
        \end{equation}
\end{enumerate}
\end{definition}

Each nonterminal pass deletes two elements of $R$, so the algorithm is finite.
Applying this algorithm, we obtain a criterion for the unresolved case, which constitutes the main result of this paper.

\begin{theorem}
\label{thm:main}
Let $L$ be a fixed nonempty finite set of positive integers and let
$k=\max L\ge2$.  Suppose that $k-1\in L$, and put
$M=L\setminus\{k-1,k\}$.  Then, for every integer $q\ge2k^2$,
\[
  \Layers{qk-1}{L}\text{ is $1$-factorable}
  \quad\Longleftrightarrow\quad
  M\text{ is accepted by the gcd-peeling algorithm}.
\]
\end{theorem}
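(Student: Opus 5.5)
The plan is to reduce everything to the integer system given by the realization principle, i.e. the network-flow formulation of He, Huang, and Ma \cite[Theorem~2.2]{HeHuangMa}. Put $n=qk-1$ and $D=\sum_{\ell\in L}\binom{n-1}{\ell-1}$, which is the common vertex degree and hence the number of $1$-factors. A \emph{type} $T$ is a multiset of elements of $L$ summing to $n$, and $a_\ell(T)$ denotes the multiplicity of $\ell$ in $T$. Then $\Layers{n}{L}$ is $1$-factorable if and only if there are nonnegative integers $x_T$ with
\[
\sum_T x_T=D \qquad\text{and}\qquad \sum_T x_T a_\ell(T)=\binom{n}{\ell}\quad(\ell\in L).
\]
Both directions of Theorem~\ref{thm:main} are then statements about the solvability of this system.

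\textbf{Step 1 (slack accounting).} A type with $j$ parts satisfies $\sum_{\ell\in T}(k-\ell)=jk-n=(j-q)k+1$. Hence $j\ge q$, and the only type with exactly $q$ parts is $\{k^{q-1},k-1\}$. Summing over all factors, the total excess $\sum_T x_T(j(T)-q)$ is the explicit quantity
\[
\sum_{\ell}\binom{n}{\ell}\frac{n-q\ell}{n}.
\]
For $q\ge 2k^2$, the negative $\ell=k$ term and the positive $\ell=k-1$ term nearly cancel, leaving a budget of order $n^{k-2}$. This is the same order as $\binom{n}{h}$ for $h=k-2$, and it dominates all lower levels. The aim is to show that the budget forces almost every factor containing an edge of size $h\in M$ to be \emph{minimal} among the types containing that edge. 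I would then classify these minimal types. For instance, a factor with $q+1$ parts containing a single $h$-part must contain exactly $h+1$ parts of size $k-1$. More generally, the sizes from $M$ appearing in a near-minimal factor must combine with $k$ and $k-1$ in a constrained way.

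\textbf{Step 2 (necessity).} Suppose the algorithm rejects at a stage with current modulus $m$, top element $h$, $d=\gcd(m,h)>1$ and $h-1\notin R$. I would construct a weight $\varphi\colon L\to\mathbb{Z}$ that is invariant on all types permitted by the budget, taken modulo $d$ on the top surviving level. The idea is that levels $h'$ already peeled (the earlier bridge pairs) are handled by pairing $h'$ with $h'-1$, which makes their contribution to the extra parts cancel. Meanwhile the level $h$ can only be absorbed in blocks whose total size is divisible by $d$, because $h-1$ is unavailable to bridge it. Combining this with the exact counts $\binom{n}{h}$, $\binom{n}{k-1}$ and $\binom{n}{k}$ should produce either a budget overflow or a congruence violation for every $q\ge2k^2$.

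\textbf{Step 3 (sufficiency).} If the algorithm accepts, I would write down an explicit solution $(x_T)$. This has three ingredients.
\begin{enumerate}
  \item[(a)] Each recorded bridge pair $(h,h-1)$ is covered by types in which $h$ and $h-1$ appear together, so that the deficiency is $\equiv1\pmod k$ with minimal extra parts.
  \item[(b)] At the terminal stage (either $R=\varnothing$, or $\gcd(m,h)=1$), I use B\'ezout to combine parts of size $h$ with the current modulus $m$, and absorb all remaining levels of $R$ at the same time.
  \item[(c)] The leftover is filled with copies of $\{k^{q-1},k-1\}$ and of $\{k^{q}\}$-like corrections, adjusted through the $k-1$ and $k$ counts.
\end{enumerate}
The condition $q\ge2k^2$ is what guarantees that all coefficients stay nonnegative. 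The realization principle then converts this integer solution into a $1$-factorization.

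I expect the main obstacle to be Step 2. The difficulty is turning the near-tightness of the budget into an exact arithmetic obstruction that holds uniformly in $q$, and in particular showing that the recursively reduced modulus $m\leftarrow\gcd(m,h)$ is exactly the right invariant after several bridge pairs have been peeled. My first attempt would be to prove, by induction along the run of the algorithm, that in any solution all but a negligible fraction of the edges of the peeled levels lie in types whose part sizes from the remaining set $R$ sum to a multiple of the current modulus $m$. At the rejecting step, this statement would then contradict the exact count $\binom{n}{h}$ against the available budget.
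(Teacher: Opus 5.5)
\textbf{The main gap is the necessity direction.} Step~2 contains no working argument, and the Step~1 machinery it rests on cannot supply one.

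The excess identity $\sum_T x_T\,(j(T)-q)=\sum_\ell\binom{n}{\ell}\frac{n-q\ell}{n}$ is a linear consequence of $\sum_T x_T=D$ and $\sum_T x_T\,j(T)=\sum_\ell\binom n\ell$. So every solution consumes the budget exactly; indeed the $\ell=k$ and $\ell=k-1$ terms cancel exactly, not nearly. The identity therefore cannot force near-minimal types.

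For example, take $L=\{1,2,3\}$, which is accepted and has $h=k-2=1$. There is a solution using one type $\{1,1,3^{q-1}\}$, then $q-1$ types $\{1,1,1,2,3^{q-2}\}$ of excess $2$, then baseline types $\{2,3^{q-1}\}$. This holds even though types of excess $1$ containing a $1$-part exist, such as $\{1,2,2,3^{q-2}\}$. Your weight $\varphi$ and your inductive claim about $R$-parts summing to multiples of $m$ are never made precise, and nothing of that kind is needed.

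\textbf{The missing idea is a pigeonhole count.} At the rejection state put $T=R\setminus\{h\}\subseteq[h-2]$, $E=B\cup T$ and $d=\gcd(k,C\cup\{h\})>1$. Every type must contain a part whose size lies in $E\cup\{k-1\}$. Otherwise all its sizes lie in $C\cup\{h,k\}$ and are divisible by $d$, whereas $d\nmid qk-1$. Write $H_i=\binom ni$, $D_i=\binom{n-1}{i-1}$, $S=\sum_{i\in M}D_i$ and $U=H_{k-1}$. Your factor count is $D=S+U$, so the covering gives $S+U\le U+\sum_{i\in E}H_i$. But Pascal's rule $H_{c-1}=D_{c-1}+D_c$ cancels each bridge pair exactly, and $H_i-D_i=D_{i+1}$, so
\[
S-\sum_{i\in E}H_i=D_h-\sum_{i\in T}D_{i+1}>0 .
\]
Positivity holds because $|T|\le h-2$ and each $D_{i+1}\le\binom{n-1}{h-2}=\frac{h-1}{n-h+1}D_h$. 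This excludes even nonnegative real solutions. The congruence enters only to show that every type meets $E\cup\{k-1\}$; the contradiction itself is a count, not a congruence violation or a budget overflow.

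\textbf{Step~3 also omits the constraint that pins down the construction.} Since $\sum_T x_T=S+U$ but only $U$ edges of size $k-1$ exist, at least $S$ factors contain no $(k-1)$-part, and exactly $S$ if every other factor contains one. Each such factor needs $M$-parts of total size $\equiv-1\pmod k$.

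The paper's choices are made precisely to hit this count. For each $c\in C$ it takes $H_{c-1}$ bridge types $\{c-1,\,c^{k/\gcd(k,c)-1},\,k^{q-c/\gcd(k,c)}\}$. It adds $\sum_{i\in R}D_i$ B\'ezout terminal types. Then $\sum_{c\in C}H_{c-1}+\sum_{i\in R}D_i=S$ exactly.

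These defect types use only a small fraction of the $c$-edges. So the leftover edges of \emph{all} levels in $M$, not just those of $R$, must be placed into types with exactly one $(k-1)$-part. That requires three things:
\begin{enumerate}
  \item[(a)] checking that the leftover multiset of sizes has total $\equiv0\pmod k$;
  \item[(b)] partitioning it into zero-sum blocks mod $k$ of length at most $k$;
  \item[(c)] verifying the capacity bounds $A_i\le H_i$ for the levels used by the defect types.
\end{enumerate}
Your item~(c) supplies only copies of $\{k^{q-1},k-1\}$, which carry no $M$-parts. Your ``$\{k^q\}$-like corrections'' are not types at all, since $qk=n+1$.
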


Theorem~\ref{thm:main} settles exactly the only residue class left open by
Proposition~\ref{prop:old-reduction}.  Indeed, that proposition guarantees the
class $0\pmod{k}$ and excludes every class except possibly $-1\pmod{k}$; it
also excludes the latter class when $k-1\notin L$.  When $k-1\in L$,
Theorem~\ref{thm:main} decides this final class according to the peeling
output.  Combining these statements gives the following complete trichotomy.

\begin{corollary}
\label{cor:hhm-classification}
The nondegenerate $k\ge2$ form of Conjecture~\ref{conj:hhm} holds for every
fixed set $L$.  More precisely:
\begin{enumerate}
  \item[(i)] if $k-1\notin L$, alternative~{\rm(i)} holds;
  \item[(ii)] if $k-1\in L$ and the gcd-peeling algorithm accepts
        $L\setminus\{k-1,k\}$, alternative~{\rm(ii)} holds;
  \item[(iii)] if $k-1\in L$ and the algorithm rejects,
        alternative~{\rm(i)} holds.
\end{enumerate}
\end{corollary}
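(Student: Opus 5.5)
The plan is to reduce to an integer feasibility problem and then analyze that problem arithmetically. By the network-flow realization principle of He, Huang, and Ma \cite[Theorem~2.2]{HeHuangMa}, $\Layers{n}{L}$ is $1$-factorable if and only if there are nonnegative integers $x_a$, indexed by \emph{partition types} $a=(a_\ell)_{\ell\in L}$ with $\sum_\ell \ell a_\ell=n$, such that
\[
  \sum_a x_a a_\ell=\binom{n}{\ell}\qquad(\ell\in L).
\]
We work with $n=qk-1$ throughout.

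\textbf{Defect bookkeeping.} For a type $a$ define its \emph{defect} $\delta(a)=\sum_\ell (k-\ell)a_\ell=k\sum_\ell a_\ell-n$. Then $\delta(a)\equiv1\pmod k$ and $\delta(a)\ge1$. Equality $\delta(a)=1$ holds exactly for the type consisting of one $(k-1)$-set and $q-1$ $k$-sets.

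Summing over a putative solution, the total defect $\sum_\ell (k-\ell)\binom{n}{\ell}$ is compared with the number of factors $F=\frac1n\sum_\ell \ell\binom n\ell$. We have $\binom{n}{k-1}=\frac{k}{n-k+1}\binom nk$, and $n=qk-1$. From this we should get that the excess $\sum_a x_a(\delta(a)-1)$ is an explicit quantity of order $\sum_{h\in M}(k-h)\binom nh$, up to a correction coming from the $k$-level. That correction is what forces every edge of size $h\in M$ to sit in a factor whose defect is at least $k+1$, and it pins down exactly how much defect is available.

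Reducing the identities modulo appropriate integers, we then get linear congruence conditions on the vectors $(\sum_a x_a a_h)_{h\in M}$. These conditions involve the binomials $\binom{qk-1}{h}$, and they can be evaluated by Kummer/Lucas-type divisibility, since $\gcd(k,h)$ controls $\binom{qk-1}{h}$ modulo divisors of $k$.

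\textbf{Necessity.} Suppose the algorithm rejects. We follow its run, with bridge pairs $(h_1,h_1-1),\dots,(h_r,h_r-1)$ and moduli $k=m_0,m_1=d_1,\dots,m_r$, up to the failing $h$ with $d=\gcd(m_r,h)>1$ and $h-1\notin R$. We will show by induction along the run that in any feasible solution, every factor whose defect exceeds $1$ has its defect (minus $1$, modulo the current $m_i$) carried only by edges of sizes in $R$ together with already peeled bridge pairs. The bridge pair $(h_i,h_i-1)$ contributes $2k-2h_i+1$, which fixes the residue class at the cost of reducing the modulus to $\gcd(m_{i-1},h_i)$.

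At the failing step, the largest remaining size $h$ has no partner $h-1$. Counting the $h$-edges modulo $d$, using the exact excess budget and the fact that all other contributions are divisible by $d$, yields
\[
  \binom{qk-1}{h}\equiv 0\pmod d
\]
as a requirement. On the other hand, Lucas' theorem modulo a prime $p\mid d$ shows $\binom{qk-1}{h}\not\equiv0\pmod p$, since $p\mid k$ and $p\mid h$. This contradiction proves necessity.

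\textbf{Sufficiency.} Suppose the algorithm accepts, either because $R=\varnothing$ or because it reaches $d=1$. We construct an explicit solution. Most factors will be the defect-$1$ type. For each bridge pair we use types containing one $h$-set, one $(h-1)$-set, and otherwise sets of sizes $k$ and $k-1$. For the terminal element $h$ with $\gcd(m,h)=1$, Bézout supplies types mixing $h$-sets with $k$- and $(k-1)$-sets whose counts solve the required congruence. Smaller elements of $R$ are absorbed by greedy types, whose residues are then corrected by those Bézout combinations.

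The multiplicities are determined by solving a triangular linear system from the top level downward. The hypothesis $q\ge2k^2$ is used to guarantee that all resulting coefficients, in particular the number of defect-$1$ factors and the number of $k$-sets left over, are nonnegative. The reason is that the corrections are bounded by $k^2\binom{n}{h}$-type quantities, while $\binom{n}{k-1}$ dominates.

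\textbf{Main obstacle.} I expect the main difficulty to be the necessity direction, specifically the inductive invariant that justifies replacing the modulus $m$ by $\gcd(m,h)$ after each bridge. One must rule out that factors with defect $2k+1$ or larger combine lower levels in ways that evade the congruence. My first attempt would be to work with the $\Z$-module generated by the defect vectors of all types, reduced modulo $m_i$, and to show that it is exactly what the peeling records. If that fails, I would fall back on an explicit extremal argument using the tight excess budget.
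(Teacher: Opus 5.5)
\textbf{The necessity step fails.} Write $H_i=\binom ni$, $D_i=\binom{n-1}{i-1}$, $S=\sum_{i\in M}D_i$ and $U=H_{k-1}$.

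Your Lucas claim is false. For $L=\{4,5,6\}$ the algorithm rejects at once ($h=4$, $d=2$, $3\notin R$). Yet for $q=74\ge2k^2$ we have $n=443=(110111011)_2$, whose binary digit of weight $4$ is $0$, so $\binom{443}{4}$ is even. For all $q$, Lucas guarantees $\binom{qk-1}{h}\not\equiv0\pmod p$ only when $h<p^{v_p(k)}$.

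Beyond that, the requirement $\binom{qk-1}{h}\equiv0\pmod d$ is never derived, and no congruence can be the mechanism: in the rejecting case the type equations have no nonnegative \emph{real} solution. What is missing is a hitting argument at the rejection state alone; no induction along the run is needed. Put $T=R\setminus\{h\}$ and $E=B\cup T$. Every type has $\lambda_{k-1}+\sum_{i\in E}\lambda_i\ge1$, since otherwise all its part sizes lie in $C\cup\{h,k\}$ and are divisible by $d$, while $n\equiv-1\pmod d$. Weighting by $x_\lambda$, the number of factors is at most $U+\sum_{i\in E}H_i$. Double counting forces it to equal $S+U$, and Pascal's rule gives $S-\sum_{i\in E}H_i=D_h-\sum_{i\in T}D_{i+1}>0$ for large $n$. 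In the example, $\lambda_5$ is odd in every type, so there are at most $\binom n5$ factors, whereas there must be $\binom n5+\binom{n-1}{3}$.

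\textbf{The sufficiency construction also fails as specified.} From $\sum_\lambda x_\lambda=S+U$ and $\sum_\lambda x_\lambda\lambda_{k-1}=U$ one gets $\sum_\lambda x_\lambda(1-\lambda_{k-1})=S$. So you need, net, $S$ factors with no $(k-1)$-set, each with $M$-weight $\equiv-1\pmod k$, and every factor with $\lambda_{k-1}\ge2$ eats into this budget.

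Your bridge types (one $h$-set, one $(h-1)$-set, residue repaired by $(k-1)$-sets) need $\lambda_{k-1}\equiv2h\pmod k$, which is usually at least $2$. Take the accepted set $L=\{3,4,5,6\}$, which peels $(4,3)$. Your type is then $(\lambda_3,\lambda_4,\lambda_5,\lambda_6)=(1,1,2,q-3)$. Here $S=D_3+D_4=H_3$, and every factor with $\lambda_5=0$ has $\lambda_3$ odd. So the budget is tight and forces $\lambda_5\le1$ in every factor of every solution: your type can never be used.

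The bridge must itself be a defect type: one $(c-1)$-set and $k/\gcd(k,c)-1$ sets of size $c$, of weight $kc/\gcd(k,c)-1$, used $H_{c-1}$ times. The tail $R$ is absorbed by one B\'ezout defect type supported on $C\cup\{h\}$, used $\sum_{i\in R}D_i$ times, which makes the defect count exactly $S$. Leftover $M$-edges are split into zero-sum blocks modulo $k$ of length at most $k$, each completed by exactly one $(k-1)$-set.

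\textbf{The assembly is also missing.} The corollary concerns all large $n$, so you must invoke He--Huang--Ma (Proposition~\ref{prop:old-reduction}) for the class $0$, for excluding the other classes, and for the case $k-1\notin L$. Your proposal addresses none of these.
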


The criterion includes the previously known benchmark cases.  It accepts
$M=\varnothing$, recovering $L=\{k-1,k\}$, and it accepts $M=[k-2]$,
corresponding to $L=[k]$, whose exact classification was obtained by Brouwer
and Tijdeman \cite[Theorem~1]{BrouwerTijdeman} and revisited in
\cite{HeHuangMa}.  More generally, the case in which peeling exhausts $M$
follows from \cite[Theorem~8a and Proposition~2]{BrouwerTijdeman}.
For $k=4$ and
$L=\{2,3,4\}$, it rejects immediately, matching the classical obstruction.
When $k-2\in L$, the first step accepts at once if $k$ is odd.  If $k$ is
even, it rejects when $k-3\notin L$; otherwise it peels $(k-2,k-3)$ and
reduces the modulus to $2$.  In this last case the
subsequent steps pair consecutive odd and even levels, so the criterion
specializes to the buddy-parity condition of Brouwer and Tijdeman
\cite[Theorem~11]{BrouwerTijdeman}.

The proof has two complementary parts.  If peeling accepts, we explicitly
construct an integral solution of the type equations.  Bridge types absorb
the paired levels, one terminal congruence type handles the unpaired tail,
and a zero-sum decomposition closes the remaining coordinates.  If peeling
rejects, the final nontrivial gcd forces every type to use one of a prescribed
set of levels; a binomial-coefficient comparison then rules out even a
nonnegative real solution of the type equations. \Cref{sec:2} records the type reduction and the
elementary tools. \Cref{sec:3} proves Theorem~\ref{thm:main} and
Corollary~\ref{cor:hhm-classification}.

\section{Preliminaries}\label{sec:2}
\subsection{Types}

For $n\ge0$, write $[n]=\{1,\dots,n\}$, with $[0]=\varnothing$.

\begin{definition}
\label{def:types}
For $n\ge k$, an \emph{$(n,L)$-type} is a vector $\lambda=(\lambda_\ell)_{\ell\in L}\in\Z_{\ge0}^{L}$ such that $\sum_{\ell\in L}\ell\lambda_\ell=n$. Let $\T(n,L)$ be the finite set of all $(n,L)$-types. 
\end{definition}

The following theorem is the exact realization principle that we use as a
black box.  It is proved in \cite[Theorem~2.2]{HeHuangMa} by a network-flow
argument in the spirit of Baranyai's equalization method.

\begin{theorem}[Type realization \cite{HeHuangMa}]
\label{thm:type-realization}
The hypergraph $\Layers{n}{L}$ is $1$-factorable if and only if there are
nonnegative integers $(x_\lambda)_{\lambda\in\T(n,L)}$ satisfying
\begin{equation}
  \sum_{\lambda\in\T(n,L)}x_\lambda\lambda_\ell
  =\binom{n}{\ell}
  \qquad(\ell\in L).
  \label{eq:type-system}
\end{equation}
\end{theorem}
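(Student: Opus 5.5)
The necessity direction should follow from counting edges in a given factorization, and for sufficiency I would use Baranyai's vertex-by-vertex method, driven by the integrality of network flows.

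\emph{Necessity.} Suppose $\Layers{n}{L}$ has a $1$-factorization. Every $1$-factor $F$ partitions $[n]$ into edges whose sizes lie in $L$. Let $\lambda_\ell$ be the number of edges of $F$ of size $\ell$. Then $\sum_\ell \ell\lambda_\ell=n$, so $F$ has a well-defined type $\lambda\in\T(n,L)$. Let $x_\lambda$ be the number of factors of type $\lambda$. Each $\ell$-subset of $[n]$ lies in exactly one factor, so counting $\ell$-edges over all factors gives \eqref{eq:type-system}.

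\emph{Sufficiency: setup.} Fix nonnegative integers $x_\lambda$ satisfying \eqref{eq:type-system}. Create $N=\sum_\lambda x_\lambda$ factors, and give each factor of type $\lambda$ exactly $\lambda_\ell$ \emph{slots} of target size $\ell$, for each $\ell\in L$. Every slot starts empty. We insert the vertices $1,2,\dots,n$ one at a time, and each factor places each new vertex into exactly one of its slots. After $i$ steps we maintain two invariants:
\begin{enumerate}
  \item[(a)] in each factor, the current slot contents partition $[i]$;
  \item[(b)] for every $S\subseteq[i]$ and $\ell\in L$ with $|S|\le\ell$, the number of slots (over all factors) of target size $\ell$ with content exactly $S$ equals $\binom{n-i}{\ell-|S|}$.
\end{enumerate}
At $i=0$, invariant (b) is exactly \eqref{eq:type-system}. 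At $i=n$, each $\ell$-set $S$ is the content of exactly $\binom{0}{0}=1$ slot of size $\ell$, and every slot is full because $\binom{0}{\ell-|S|}=0$ when $|S|<\ell$. By (a), each factor is then a $1$-factor, and every edge of $\Layers{n}{L}$ appears exactly once. So the whole argument reduces to the inductive step.

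\emph{Inductive step.} Build a network with a source, one node for each factor, one node for each class $(S,\ell)$, and a sink. Put an arc source $\to$ factor with capacity exactly $1$. Put an arc factor $\to(S,\ell)$ whenever that factor owns a slot of that class. Put an arc $(S,\ell)\to$ sink that must carry exactly $\binom{n-i-1}{\ell-|S|-1}$. A fractional flow exists: each factor sends weight $(\ell-|S|)/(n-i)$ to each of its slots. By (a), the residual capacities $\ell-|S|$ of a factor's slots sum to $n-i$, so each factor sends total weight $1$. By (b), class $(S,\ell)$ receives
\[
  \binom{n-i}{\ell-|S|}\cdot\frac{\ell-|S|}{n-i}=\binom{n-i-1}{\ell-|S|-1},
\]
which is the required amount. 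Now impose lower and upper bounds on every arc equal to the floor and ceiling of this fractional value. All node demands are integers, so the integral flow theorem with bounds gives an integral flow. Moreover, each factor-to-class arc carries $0$ or $1$, and an arc carries $0$ whenever the fractional value is $0$, which happens when the slot is already full.

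Each factor places vertex $i+1$ into a slot of the class its unit of flow goes to, so (a) persists. For (b), class $(S\cup\{i+1\},\ell)$ now has $\binom{n-i-1}{\ell-|S|-1}$ slots, as required. The slots of class $(S,\ell)$ that did not receive the vertex number
\[
  \binom{n-i}{\ell-|S|}-\binom{n-i-1}{\ell-|S|-1}=\binom{n-i-1}{\ell-|S|},
\]
which is again the required count. This completes the induction.

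The main obstacle I expect is carrying out the integral-rounding step carefully. One must check that the required sink flows equal the fractional class totals exactly, so that no rounding is needed there. One must also check that full slots never receive a vertex, which holds because their fractional weight is $0$.
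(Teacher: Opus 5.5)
Your proof is correct. It is essentially the Baranyai-style, vertex-by-vertex network-flow argument that the paper invokes from He--Huang--Ma: the paper treats the theorem as a black box and does not reproduce that argument. The one detail worth stating explicitly is that a factor may own several empty slots of the same class $(\varnothing,\ell)$, so the factor-to-class arc carries their summed weight; this is still at most $1$, so your rounding to $0$ or $1$ goes through unchanged.
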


Thus it is enough to construct integral multiplicities of types; the theorem
then realizes those multiplicities by an actual $1$-factorization.

\subsection{Earlier asymptotic reductions and counting identities}

By Proposition~\ref{prop:old-reduction}, there is only one unresolved case. Henceforth fix
\[
  n=qk-1,\qquad \{k-1,k\}\subseteq L,\qquad
  M:=L\setminus\{k-1,k\}\subseteq[k-2].
\]
For $1\le i\le k$, abbreviate
\begin{equation*}
  H_i:=\binom{n}{i},\qquad
  D_i:=\binom{n-1}{i-1},\qquad
  S:=\sum_{i\in M}D_i,\qquad
  U:=H_{k-1}.
\end{equation*}
Double counting and Pascal's identity give
\begin{equation}
  iH_i=nD_i,\qquad H_i=D_i+D_{i+1}\quad(1\le i\le k-1),
  \label{eq:basic-identities}
\end{equation}
and therefore $D_{k-1}+D_k=U$.

Even if the multiplicities in \eqref{eq:type-system} are relaxed to be
nonnegative reals, their total mass is forced.  Indeed,
\begin{align*}
  n\sum_{\lambda}x_\lambda
  &=\sum_{\lambda}x_\lambda
    \sum_{\ell\in L}\ell\lambda_\ell
    =\sum_{\ell\in L}\ell H_\ell
    =n\sum_{\ell\in L}D_\ell,
\end{align*}
so we have 
\begin{equation}
  \sum_{\lambda}x_\lambda=S+U.
  \label{eq:total-mass}
\end{equation}
The types remain integral in this observation; only their multiplicities have
been relaxed.

\subsection{Congruences and zero-sum blocks}

For a finite set $A$ of integers, let $\gcd(k,A)$ denote the gcd of $k$ and
all elements of $A$, with $\gcd(k,\varnothing):=k$.

\begin{definition}
Let $(\mathcal{R},+,\cdot)$ be a ring. A sequence $\{a_{i}\}_{1\leq i\leq t}$ in $\mathcal{R}$ is called \emph{zero-sum} if $\sum_{i=1}^{t}a_{i}=0$.  
\end{definition}

\begin{lemma}
\label{lem:zero-sum}
Let $A$ be a finite set of positive integers.
\begin{enumerate}
  \item Put $g=\gcd(k,A)$.  The congruence $\sum_{a\in A}az_a\equiv r\pmod{k}$ has an integral solution if and only if $g\mid r$.  Whenever a
        solution exists, one exists with every $z_a\in\{0,\dots,k-1\}$.
  \item Every finite zero-sum sequence in $\Z/k\Z$ can be partitioned into
        nonempty zero-sum subsequences, each of length at most $k$.
\end{enumerate}
\end{lemma}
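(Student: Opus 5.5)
Part 1 is handled by a standard argument on the subgroup generated by $A$ in $\Z/k\Z$. Part 2 is handled by iterating the classical Erdős--Ginzburg--Ziv-type (Davenport constant) fact that every sequence of length at least $k$ in $\Z/k\Z$ has a nonempty zero-sum subsequence.

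For part~1, I would first show that the subgroup of $\Z/k\Z$ generated by the residues of the elements of $A$ is exactly $g\Z/k\Z$. By Bézout, there are integers $u_0$ and $u_a$ ($a\in A$) with $ku_0+\sum_a a u_a=g$. Reducing modulo $k$ shows that $g$ lies in this subgroup. Conversely, every element of the subgroup is divisible by $g$, since $g\mid k$ and $g\mid a$ for each $a\in A$.

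Next, if $g\mid r$, I would write $r=tg$ and put $z_a=tu_a$, which solves the congruence. If instead a solution exists, then $r\equiv\sum_a a z_a\pmod k$. The right-hand side is divisible by $g$, and since $g\mid k$, this forces $g\mid r$. Finally, the congruence depends only on each $z_a$ modulo $k$, so I may replace each $z_a$ by its least nonnegative residue in $\{0,\dots,k-1\}$.

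For part~2, I would argue by strong induction on the length $t$ of the zero-sum sequence $(a_1,\dots,a_t)$. The key input is the pigeonhole lemma: any $k$ elements $b_1,\dots,b_k$ of $\Z/k\Z$ contain a nonempty zero-sum subsequence of length at most $k$. To prove it, consider the $k+1$ prefix sums $0,b_1,b_1+b_2,\dots,b_1+\dots+b_k$. Two of them coincide, and their difference is a consecutive block of length between $1$ and $k$ with sum $0$.

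The induction then runs as follows. If $t\le k$, the whole sequence is a single admissible block (for $t\ge1$). If $t>k$, I would apply the lemma to the first $k$ terms to extract a nonempty zero-sum subsequence $T$ with $|T|\le k$. The complementary subsequence still sums to $0$, because the total sum and the sum over $T$ are both $0$. It also has length strictly less than $t$, so induction partitions it into zero-sum blocks of length at most $k$. Adding $T$ to these blocks completes the partition. The empty sequence corresponds to the trivial empty partition.

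I expect the only real subtlety to be bookkeeping: the blocks must form an honest partition of the index multiset, and the remainder must be nonempty or else handled trivially. The pigeonhole step itself is routine, so no step is a genuine obstacle.
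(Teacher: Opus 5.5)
Your proposal is correct and follows essentially the same route as the paper: B\'ezout's identity with reduction of the coefficients modulo $k$ for part~1 (this also covers $A=\varnothing$, where $g=k$), and for part~2 the $k+1$ prefix sums with pigeonhole, followed by induction on the length. The only quibble is terminological: the fact you use is the Davenport-constant bound $D(\Z/k\Z)=k$, not Erd\H{o}s--Ginzburg--Ziv, which instead guarantees a zero-sum subsequence of length exactly $k$ among any $2k-1$ terms.
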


\begin{proof}
For the first statement, the case $A=\varnothing$ is immediate.  Otherwise
necessity is clear.  If $g\mid r$, Bezout's identity expresses $g$ as an
integral linear combination of $k$ and the elements of $A$.  Multiplying by
$r/g$ gives a solution, and reducing each coefficient modulo $k$ places it in
$\{0,\dots,k-1\}$.

For the second statement, order a nonempty zero-sum sequence as
$a_1,\dots,a_t$.  If $t\le k$, take the whole sequence.  If $t>k$, consider
the $k+1$ partial sums
\[
  0,\ a_1,\ a_1+a_2,\ \dots,\ a_1+\cdots+a_k
\]
in $\Z/k\Z$.  Either a nonempty initial sum is zero or two partial sums agree.
In either case there is a nonempty zero-sum subsequence of length at most $k$.
Remove it and apply induction to the remaining zero-sum sequence.
\end{proof}

We also repeatedly use the elementary monotonicity
\begin{equation*}
  \binom{N}{a}\le\binom{N}{b}
  \qquad\left(0\le a\le b\le\frac{N-1}{2}\right),
\end{equation*}
which follows from
$\binom{N}{t+1}/\binom{N}{t}=(N-t)/(t+1)\ge 1$ for $t\leq (N-1)/2$. In particular, when
$n\ge2k$, the sequences $D_i$ and $H_i$ are nondecreasing on every index
range used below.

\subsection{Peeling invariants}

\begin{lemma}
\label{lem:state-invariants}
At every visit to Step~1 of the gcd-peeling algorithm, we have $M=R\mathbin{\dot\cup}C\mathbin{\dot\cup}B$, $B=\{c-1:c\in C\}$, the map $c\mapsto c-1$ is a bijection from $C$ to $B$, and $m=\gcd(k,C)$. 
Moreover, every element of $C\cup B$ is larger than every element of $R$.
Consequently, if $R\ne\varnothing$ and $h=\max R$, then
\begin{equation}
  \gcd(m,h)=\gcd(k,C\cup\{h\}).
  \label{eq:state-terminal-gcd}
\end{equation}
The algorithm terminates.
\end{lemma}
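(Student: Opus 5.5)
We argue by induction on the number of visits to Step~1, treating all the listed properties together as one invariant. At the first visit we have $R=M$, $C=B=\varnothing$, $m=k$, so $M=R\mathbin{\dot\cup}C\mathbin{\dot\cup}B$ holds trivially. The map $C\to B$ is the empty bijection, and $m=k=\gcd(k,\varnothing)$ by the convention fixed before Lemma~\ref{lem:zero-sum}. The ordering condition is vacuous.

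For the inductive step, suppose the invariants hold at some visit and the pass is nonterminal, i.e.\ it reaches Step~5. Then $R\neq\varnothing$, $h=\max R$, $d=\gcd(m,h)>1$ and $h-1\in R$. The update \eqref{eq:peeling-update} moves $h$ from $R$ to $C$ and $h-1$ from $R$ to $B$. Since $h$ and $h-1$ were in $R$, they are disjoint from $C\cup B$, so the disjoint decomposition of $M$ persists. We also get $h\notin C$ and $h-1\notin B$, so adjoining the pair $h\mapsto h-1$ keeps $c\mapsto c-1$ a bijection $C\to B$ with $B=\{c-1:c\in C\}$.

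Next, the gcd. The new modulus is $d=\gcd(m,h)=\gcd(\gcd(k,C),h)=\gcd(k,C\cup\{h\})$, which is exactly the invariant for the new $C$. For the ordering, the old elements of $C\cup B$ exceed every element of the old $R$, hence every element of the new $R\subseteq R$. The new elements $h$ and $h-1$ exceed every element of $R\setminus\{h,h-1\}$, because $h=\max R$ and every other element of $R$ is at most $h-2$. This completes the induction.

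Identity \eqref{eq:state-terminal-gcd} then follows from associativity of gcd: $\gcd(m,h)=\gcd(\gcd(k,C),h)=\gcd(k,C\cup\{h\})$. For termination, each nonterminal pass lowers $|R|$ by exactly two, since $h\neq h-1$ and both lie in $R$. As $|R|\le|M|\le k-2$ initially, there are at most $(k-2)/2$ nonterminal passes. After that, a visit to Step~1 either finds $R=\varnothing$ or the pass returns at Step~3 or~4. Nothing here is a real obstacle; the only point needing care is that $h-1\notin C\cup B$, which holds because $h-1\in R$ and the union is disjoint.
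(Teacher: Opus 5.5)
Your proof is correct and follows essentially the paper's argument: check the invariants at the initial state, and verify that the Step~5 update preserves the partition, the bijection $c\mapsto c-1$, the identity $m=\gcd(k,C)$ (via $\gcd(\gcd(k,C),h)=\gcd(k,C\cup\{h\})$) and the order condition (since $h=\max R$ leaves only elements $\le h-2$), with termination from $|R|$ dropping by two on each nonterminal pass. Your version is just more explicit, for instance in invoking the convention $\gcd(k,\varnothing)=k$ and in checking $h-1\notin B$ for injectivity.
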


\begin{proof}
All assertions hold initially.  Suppose a pair $(h,h-1)$ is peeled.  The two
removed elements are inserted into the disjoint sets $C$ and $B$, preserving
the partition and the stated bijection.  The update of $m$ gives $\gcd(m,h)=\gcd(k,C\cup\{h\})$, 
which is the new gcd identity.  Since $h=\max R$, every element left in $R$ is
at most $h-2$, whereas all previously peeled elements were already larger
than $h$.  Thus the order invariant is preserved.  Finally, each nonterminal
iteration removes two elements of $R$.
\end{proof}

\section{Proof of the classification}\label{sec:3}

In this section, we will prove Theorem~\ref{thm:main}. To this end, we divide the proof into two parts.

\subsection{Acceptance implies factorability}

\begin{theorem}\label{thm:proof1}
Under the same hypotheses as in Theorem~\ref{thm:main}, if $M$ is accepted by the gcd-peeling algorithm, then $\binom{[qk-1]}{L}$ is $1$-factorable. 
\end{theorem}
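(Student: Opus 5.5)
The plan is to apply Theorem~\ref{thm:type-realization} and build an explicit nonnegative integral solution of \eqref{eq:type-system}. First I would reduce to a combinatorial partition problem on the edges of the levels in $M$.

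\textbf{Reduction.} A type $\lambda\in\T(n,L)$ with $n=qk-1$ is determined by three pieces of data:
\begin{itemize}
\item the multiset $G$ of $M$-levels it uses, with $|G|\le k$ say;
\item the number $a=\lambda_{k-1}$;
\item the number $\lambda_k$, which is then forced.
\end{itemize}
Such a type exists, with $\lambda_k\ge0$, exactly when
\[
a\equiv 1+\sum_{\ell\in G}\ell \pmod k .
\]
Here $q\ge 2k^2$ guarantees enough room for the completion by $k$-edges once $|G|\le k$ and $a<2k$.

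Now count. Let $N_0$ be the multiplicity of the pure type ($G=\varnothing$, $a=1$), and let $N_M$ be the number of "$M$-groups", counted with multiplicity. The level-$(k-1)$ equation reads $\sum_g a_g+N_0=U$. The level-$k$ equation is then equivalent, by the computation leading to \eqref{eq:total-mass}, to $N_0+N_M=S+U$. Eliminating $N_0$ gives
\[
\sum_g (a_g-1)=-S .
\]
So it suffices to do three things:
\begin{enumerate}
\item partition the multiset consisting of $H_\ell$ copies of $\ell$, for $\ell\in M$, into groups $G$ of size at most $k$;
\item assign to each group an $a_g\in\{0,\dots,2k-1\}$ with the congruence above;
\item arrange that $\sum_g(a_g-1)=-S$ exactly, while keeping $N_0=U-\sum_g a_g\ge0$.
\end{enumerate}
The last condition is automatic for large $q$, since $U=H_{k-1}$ dominates all $H_\ell$ with $\ell\le k-2$ by the monotonicity remark. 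The natural target is exactly $S$ groups with sum $\equiv-1\pmod k$ (so $a_g=0$), and all other groups with sum $\equiv 0$ (so $a_g=1$). This is consistent, because $\sum_{\ell\in M}\ell H_\ell=nS\equiv -S\pmod k$.

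\textbf{Following the run of the peeling algorithm.} Let the accepted run produce $C$, $B$ and a terminal state.
\begin{itemize}
\item \emph{Bridge types.} For each bridge pair $(h,h-1)$ I use groups containing one $h$-edge and one $(h-1)$-edge, possibly together with further copies of $h$, to absorb most of these two levels. The sum of a bridge group is $2h-1$ plus multiples of $h$. Combined with the relations $H_{h-1}=D_{h-1}+D_h$ and $hH_h=nD_h\equiv -D_h\pmod k$, this should let the $-1$-groups be charged level by level: $D_h$ of them come from level $h$ and $D_{h-1}$ from level $h-1$.
\item \emph{Terminal congruence type.} At termination, either $R=\varnothing$, or $\gcd(k,C\cup\{h\})=1$ by \eqref{eq:state-terminal-gcd}. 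In the second case, Lemma~\ref{lem:zero-sum}(1) supplies a single terminal group using $h$ and elements of $C$, with coefficients in $\{0,\dots,k-1\}$, whose sum has any prescribed residue. This group fixes the residue defect left over by the levels in $R$.
\item \emph{Zero-sum closing.} The remaining edges of the levels in $R$ form a zero-sum sequence in $\Z/k\Z$ after this adjustment. Lemma~\ref{lem:zero-sum}(2) then cuts them into groups of length at most $k$ with sum $\equiv0$, each with $a_g=1$.
\end{itemize}

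\textbf{Main obstacle.} I expect the hardest step to be the exact bookkeeping that makes $\sum_g(a_g-1)=-S$ hold on the nose. This requires the residues of the binomial coefficients $H_h,H_{h-1}$ modulo the current modulus $m$ to match the number of $-1$-groups each bridge must produce. I would first verify the one-bridge case $M=\{h,h-1\}$ with $\gcd(k,h)=d>1$ directly from $hH_h\equiv -D_h$ and Pascal's rule. I would then run an induction along the peeling sequence in which the modulus drops from $m$ to $d$. In that induction, any leftover discrepancy, which is a multiple of the new modulus, is pushed down to the next bridge or to the terminal congruence type. Finally I would check that all group sizes stay below $k$, so that $q\ge2k^2$ suffices for every type to be completed by $k$-edges.
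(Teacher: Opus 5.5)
Your reduction is correct and is the paper's bookkeeping in different language. The identity $\sum_g(a_g-1)=-S$ says there must be $S$ types with $\lambda_{k-1}=0$ (the paper's defect types), with the remaining non-pure types carrying one $(k-1)$-edge. Your bridge groups are the paper's bridge patterns: one $(c-1)$-edge plus $\alpha_c=k/\gcd(k,c)-1$ copies of $c$, used $H_{c-1}=D_{c-1}+D_c$ times.

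The gap is the terminal step. A single terminal group only fixes the residue modulo $k$, whereas your own identity requires an exact integer count. Each group contributes $a_g-1\ge-1$, and every bridge group consumes a $(c-1)$-edge. So the bridges supply at most $\sum_{c\in C}H_{c-1}=S-S_R$ groups with $a_g=0$, where $S_R=\sum_{i\in R}D_i$. With one terminal group and $a_g=1$ on all zero-sum groups, you get $\sum_g(a_g-1)\ge-(S-S_R)-1>-S$ whenever $S_R\ge2$. This happens for every $R\ne\varnothing$ except $R=\{1\}$. Concretely, your terminal group would need $a=1-S_R<0$.

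The missing idea is to use the residue-$(-1)$ terminal pattern on $C\cup\{h\}$ exactly $S_R$ times, a number of order $n^{h-1}$. Pascal's identity then gives exactly $S$ defect types. The genuinely nontrivial step becomes the capacity check
\[
\alpha_cH_{c-1}+\beta_cS_R\le H_c\quad(c\in C),\qquad \beta_hS_R\le H_h .
\]
This uses the order invariant: every element of $R$ is at most $c-2$, so $S_R\le|R|\binom{n-1}{c-3}$ is negligible against $H_c$ (and $R=\varnothing$ if $c=2$). It also uses $S_R\le|R|D_h$ with $D_h/H_h=h/n$, together with $n\ge2k^3-1$. Your proposal never confronts this, because it never uses the terminal pattern more than once.

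Your diagnosis of the main obstacle is also off. Once the defect types are chosen this way, no residues of binomial coefficients modulo $m$ have to be matched, and no induction along the moduli is needed. The leftover multiset is automatically zero-sum: $\sum_{i\in M}iH_i=nS$, and each of the $S$ defect patterns $P$ has $w(P)\equiv-1\pmod k$, so the leftover weight is $\sum_P\bigl(n-w(P)\bigr)\equiv0\pmod k$.

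Finally, the zero-sum closing must be applied to all leftover edges, not only those of levels in $R$. With bounded group sizes the bridges do not ``absorb most'' of their two levels. They use all of level $c-1$ but only a fraction at most $\alpha_c c/(n-c+1)$ of level $c$. So the bulk of each level $c\in C$, and of level $h$, must also go into the zero-sum blocks.
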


\begin{proof}
Assume that the algorithm accepts.  At its terminal state retain the notation
$R,C,B$ from Lemma~\ref{lem:state-invariants}.  A type with
$\lambda_{k-1}=0$ will be called a \emph{defect type}.

For a vector $P=(P_i)_{i\in M}\in\Z_{\ge0}^{M}$, put $w(P):=\sum_{i\in M}iP_i$.
Whenever $w(P)\equiv-1\pmod{k}$ and $w(P)\le n$, 
the extension
\begin{equation*}
  \widehat P_i=P_i\ (i\in M),\qquad
  \widehat P_{k-1}=0,
  \qquad
  \widehat P_k=\frac{n-w(P)}{k}
\end{equation*}
is an $(n,L)$-type.
We primarily consider two defect types: \emph{bridge patterns} and \emph{terminal patterns}. We present their constructions and determine their multiplicities. 

\medskip
\noindent\textit{Bridge patterns.}
For every $c\in C$, define $g_c:=\gcd(k,c)$ and $\alpha_c:=\frac{k}{g_c}-1$. 
When $c$ was peeled, $\gcd(m,c)>1$.  Since $m\mid k$, this implies $g_c>1$.
Also $c<k$, so $g_c<k$ and $\alpha_c$ is a positive integer.  Use
$H_{c-1}$ copies of the pattern
\begin{equation*}
  P_{c-1}=1,\qquad P_c=\alpha_c,\qquad
  P_i=0\quad(i\notin\{c-1,c\}).
\end{equation*}
Its weight is $(c-1)+c\left(\frac{k}{g_c}-1\right)=\frac{kc}{g_c}-1$, so its defect extension has $\lambda_k=q-\frac{c}{g_c}\ge0$. 
Here nonnegativity follows, with ample room, from $q\ge2k^2$ and
$c/g_c\le k-2$. We refer to the defect type obtained from this construction as a \emph{bridge pattern}.

\medskip
\noindent\textit{Terminal pattern.}
Suppose $R\ne\varnothing$ and put $h:=\max R$.  Acceptance and
\eqref{eq:state-terminal-gcd} give that $\gcd(k,C\cup\{h\})=1$. 
By Lemma~\ref{lem:zero-sum}, choose coefficients $\beta_c\in\{0,\dots,k-1\}$ for $c\in C\cup\{h\}$ such that 
\[
w_*:=\sum_{c\in C\cup\{h\}}c\beta_c\equiv-1\pmod{k}.
\]
Since $k\ge3$ in the terminal case and $C\cup\{h\}\subseteq[k-2]$,
\begin{equation*}
  0<w_*+1
  \le(k-1)\sum_{i=1}^{k-2}i+1
  =\frac{(k-1)^2(k-2)}{2}+1<k^3.
\end{equation*}
Thus $t_*:=(w_*+1)/k$ is a positive integer with $t_*<k^2$.  Put $S_R:=\sum_{i\in R}D_i$ 
and use $S_R$ copies of the pattern whose coordinates are $\beta_c$ on
$C\cup\{h\}$ and zero elsewhere in $M$.  Its defect extension has $\lambda_k=q-t_*\ge0$. We call the resulting defect type a terminal pattern. 
If $R=\varnothing$, use no terminal pattern and set $S_R=0$ and
$\beta_c=0$ for $c\in C$.  When $k=2$, one has $M=\varnothing$, so the terminal
case cannot occur.

\medskip
\noindent\textit{Exact ledger and capacity.}
Let $A_i$ be the aggregate use of level $i\in M$ by all bridge and terminal
defect patterns.  The state partition gives
\begin{align*}
  A_b&=H_b &&(b\in B),\nonumber\\
  A_c&=\alpha_cH_{c-1}+\beta_cS_R &&(c\in C),\nonumber\\
  A_h&=\beta_hS_R &&(R\ne\varnothing,\ h=\max R),\nonumber\\
  A_i&=0 &&(R\ne\varnothing,\ i\in R\setminus\{h\}).
\end{align*}
When $R=\varnothing$, the last two lines are absent.  The number of defect
patterns is exactly
\begin{align}
  \sum_{c\in C}H_{c-1}+S_R
  &=\sum_{c\in C}(D_c+D_{c-1})+\sum_{i\in R}D_i\nonumber\\
  &=\sum_{i\in M}D_i=S.
  \label{eq:defect-count}
\end{align}

We next verify the capacity inequalities
\begin{equation}
  0\le A_i\le H_i\qquad(i\in M).
  \label{eq:capacity-goal}
\end{equation}
Equality holds for $i\in B$, and the assertion is immediate for
$i\in R\setminus\{h\}$.  Fix $c\in C$.  Every $r\in R$ satisfies
$r\le c-2$.  Since $q\ge2k^2$, we have
\[
  n=qk-1\ge2k^3-1\ge2k.
\]
If $c\ge3$, binomial monotonicity gives
$S_R\le |R|\binom{n-1}{c-3}$.  Moreover,
\[
  \frac{H_{c-1}}{H_c}=\frac{c}{n-c+1},
  \qquad
  \frac{\binom{n-1}{c-3}}{H_c}
  =\frac{c(c-1)(c-2)}{n(n-c+1)(n-c+2)}.
\]
Hence
\begin{equation}
  \frac{A_c}{H_c}
  \le
  \alpha_c\frac{c}{n-c+1}
  +\beta_c|R|
   \frac{c(c-1)(c-2)}{n(n-c+1)(n-c+2)}.
  \label{eq:carrier-capacity}
\end{equation}
Here $c\ge3$ forces $k\ge5$.  Since
$n-c+1,n-c+2\ge n/2$ and
$\alpha_c,\beta_c\le k-1$, $|R|\le k-2$, and $c<k$, the two terms on the
right of \eqref{eq:carrier-capacity} are, respectively, less than
\[
  \frac{2k^2}{2k^3-1}<\frac14,
  \qquad
  \frac{4k^5}{n^3}<\frac{4}{k^4}<\frac14.
\]
Thus $A_c<H_c$.  If $c=2$, the order invariant forces $R=\varnothing$, and
\[
  \frac{A_2}{H_2}=\alpha_2\frac{2}{n-1}
  \le\frac{2(k-1)}{n-1}<1.
\]
Finally, if $R\ne\varnothing$, then $D_i\le D_h$ for every $i\in R$, so
\begin{equation*}
  \frac{A_h}{H_h}
  \le\beta_h|R|\frac{D_h}{H_h}
  =\beta_h|R|\frac{h}{n}
  \le\frac{(k-1)(k-2)^2}{n}<1.
\end{equation*}
This also covers $h=1$, and proves \eqref{eq:capacity-goal}.

\medskip
\noindent\textit{Residual blocks.}
Put $r_i:=H_i-A_i\ge0$ for $i\in M$, 
and let $\mathcal P$ be the multiset of the $S$ defect patterns.  By
\eqref{eq:basic-identities} and \eqref{eq:defect-count},
\begin{align*}
  \sum_{i\in M}ir_i
  &=\sum_{i\in M}iH_i-\sum_{P\in\mathcal P}w(P)\nonumber\\
  &=nS-\sum_{P\in\mathcal P}w(P)
    =\sum_{P\in\mathcal P}\bigl(n-w(P)\bigr)
    \equiv0\pmod{k}.
\end{align*}
Thus the multiset containing $r_i$ copies of residue $i$ is a zero-sum
sequence in $\Z/k\Z$.  Partition it using Lemma~\ref{lem:zero-sum}.  If a nonempty
block $Q$ has $\mu_i(Q)$ entries equal to $i$ and total integer weight $kt$,
then its length is at most $k$ and every entry is at most $k-2$. Therefore we have $1\le t\le k-2$. 
Associate to $Q$ the type
\begin{equation*}
  \lambda_i=\mu_i(Q)\ (i\in M),\qquad
  \lambda_{k-1}=1,
  \qquad
  \lambda_k=q-1-t.
\end{equation*}
Its last coordinate is nonnegative, and its total weight is $kt+(k-1)+k(q-1-t)=qk-1=n$. 

Let $N$ be the number of zero-sum blocks, with $N=0$ if the residual multiset
is empty.  Since every block is nonempty,
\[
  N\le\sum_{i\in M}r_i\le\sum_{i\in M}H_i.
\]
If $M\ne\varnothing$, then
\begin{equation*}
  \frac{\sum_{i\in M}H_i}{U}
  \le |M|\frac{H_{k-2}}{H_{k-1}}
  =|M|\frac{k-1}{n-k+2}<1,
\end{equation*}
because $n-k+2\ge2k^3-k+1>(k-2)(k-1)\ge|M|(k-1)$.
Thus $N\le U$; this is automatic when $M=\varnothing$.  Add $U-N$ copies of
the \emph{baseline type} 
\begin{equation*}
  \lambda_i=0\ (i\in M),\qquad
  \lambda_{k-1}=1,
  \qquad
  \lambda_k=q-1.
\end{equation*}

\medskip
\noindent\textit{Closure of all coordinates.}
The defect types contribute $A_i$ to coordinate $i\in M$, and the block types
contribute $r_i$, so the aggregate is $H_i$.  The $N$ block types and $U-N$
baseline types each have $(k-1)$-coordinate one, giving aggregate $U=H_{k-1}$.
The total number of constructed types is $S+N+(U-N)=S+U$. 
Let $K'$ be their aggregate $k$-coordinate.  Since every type has weight $n$,
\begin{align*}
  kK'
  &=n(S+U)-\sum_{i\in M}iH_i-(k-1)U\nonumber\\
  &=n(S+U)-nS-(k-1)U=(n-k+1)U.
\end{align*}
But $H_k/U=(n-k+1)/k$, so $K'=H_k$.  We have constructed a nonnegative
integral solution of every equation in \eqref{eq:type-system}; therefore
Theorem~\ref{thm:type-realization} proves that $\Layers{qk-1}{L}$ is
$1$-factorable.
\end{proof}

\subsection{Rejection implies nonfactorability}

\begin{theorem}\label{thm:proof2}
Under the same hypotheses as in Theorem~\ref{thm:main}, if $M$ is rejected by the gcd-peeling algorithm, then $\binom{[qk-1]}{L}$ is not $1$-factorable. 
\end{theorem}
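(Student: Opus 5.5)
The plan is to show that, in the rejecting state, the type system \eqref{eq:type-system} has no nonnegative \emph{real} solution, which by Theorem~\ref{thm:type-realization} rules out a $1$-factorization. Let $R,C,B,m$ be the state at the moment the algorithm rejects, with $h=\max R$. By \eqref{eq:state-terminal-gcd},
\[
  d:=\gcd(m,h)=\gcd(k,C\cup\{h\})>1,
\]
and $h-1\notin R$. By the order invariant of Lemma~\ref{lem:state-invariants}, $h-1\notin C\cup B$ either, so $h-1\notin M$.

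\textbf{Step 1 (a divisibility obstruction on types).} The levels $k$, $h$ and every $c\in C$ are divisible by $d$. Since $d\mid k$, we have $n=qk-1\equiv-1\pmod d$. Hence every $(n,L)$-type $\lambda$ must have $\lambda_\ell>0$ for some level $\ell\in L$ with $d\nmid\ell$. I will only use the weaker statement that every type has a positive coordinate in the set
\[
  E:=\{k-1\}\cup B\cup(R\setminus\{h\}).
\]
Given a nonnegative real solution $(x_\lambda)$, each type contributes at least $1$ to $\sum_{\ell\in E}\lambda_\ell$. Summing,
\[
  \sum_\lambda x_\lambda\le\sum_{\ell\in E}H_\ell .
\]
By the total-mass identity \eqref{eq:total-mass}, the left side equals $S+U$.

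\textbf{Step 2 (the ledger comparison).} Split $S$ according to the partition $M=R\,\dot\cup\, C\,\dot\cup\, B$ of Lemma~\ref{lem:state-invariants}, using the bijection $c\mapsto c-1$ from $C$ to $B$. By \eqref{eq:basic-identities}, $H_{c-1}=D_{c-1}+D_c$, so
\[
  \sum_{b\in B}H_b=\sum_{c\in C}(D_c+D_{c-1}).
\]
Together with $H_{k-1}=U$, Step~1 reduces to
\[
  \sum_{i\in R}D_i\le\sum_{i\in R\setminus\{h\}}H_i .
\]
I will contradict this by proving the strict reverse inequality, using only the single term $D_h$ on the left. If $h=1$, the right side is empty and $D_1=1>0$, a contradiction. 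If $h\ge2$, every $i\in R\setminus\{h\}$ satisfies $i\le h-2$, because $h-1\notin R$; this is where the rejection condition enters. Binomial monotonicity, valid since $n\ge2k$, then gives
\[
  \sum_{i\in R\setminus\{h\}}H_i\le (k-2)\binom{n}{h-2}.
\]

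\textbf{Step 3 (the main estimate).} The remaining point is
\[
  (k-2)\binom{n}{h-2}<\binom{n-1}{h-1}=D_h .
\]
The ratio $D_h/\binom{n}{h-2}$ equals
\[
  \frac{(h-2)!\,(n-h+2)!\,(n-1)!}{(h-1)!\,(n-h)!\,n!}=\frac{(n-h+2)(n-h+1)}{(h-1)\,n},
\]
which is at least roughly $n/(2k)$. Since $n\ge2k^3-1$, this ratio far exceeds $k-2$, so the strict inequality holds and the system has no nonnegative real solution.

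I expect the main obstacle to be the bookkeeping in Step~2 rather than the estimate. One must check that all of $C\cup B$ lies above $h$, and that $h-1$ is genuinely absent from $L$; both follow from the order invariant together with $k-1>h$. Only then do the bridge pairs cancel exactly against $\sum_{b\in B}H_b$, leaving $D_h$ as an uncompensated surplus.
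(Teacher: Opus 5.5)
Your proposal is correct and is essentially the paper's proof. It uses the same hitting set $\{k-1\}\cup B\cup(R\setminus\{h\})$ forced by $n\equiv-1\pmod d$, the same comparison with the total mass $S+U$, and the same exact cancellation of the bridge pairs leaving $D_h$ uncompensated. The only difference is the final estimate: the paper computes the exact gap $D_h-\sum_{i\in R\setminus\{h\}}D_{i+1}$, while you drop $\sum_{i\in R\setminus\{h\}}D_i$ and bound $\sum_{i\in R\setminus\{h\}}H_i$ by $(k-2)\binom{n}{h-2}$, which is equally valid here (your separate case $h=1$ is vacuous, since $d\mid h$ with $d>1$ forces $h\ge2$).
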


\begin{proof}
Assume that the algorithm rejects.  At the rejection state put
\begin{equation*}
  h:=\max R,\qquad T:=R\setminus\{h\},
  \qquad d:=\gcd(k,C\cup\{h\})>1.
\end{equation*}
Since $d\mid h$, one has $h\ge 2$.  Rejection also gives $h-1\notin R$, so
$T\subseteq[h-2]$.  The state invariants give
\begin{equation}
  M=C\mathbin{\dot\cup}B\mathbin{\dot\cup}\{h\}
    \mathbin{\dot\cup}T,
  \qquad B=\{c-1:c\in C\}.
  \label{eq:rejection-partition}
\end{equation}
Put $E:=B\cup T$, and for an integral type $\lambda$ define
\[
  g(\lambda):=\lambda_{k-1}+\sum_{i\in E}\lambda_i.
\]
We claim that
\begin{equation}
  g(\lambda)\ge1\qquad(\lambda\in\T(n,L)).
  \label{eq:type-hitting}
\end{equation}
Indeed, if $g(\lambda)=0$, nonnegativity and integrality force every coordinate
in $E\cup\{k-1\}$ to vanish.  The remaining edge sizes belong to
$C\cup\{h,k\}$ and are all divisible by $d$.  The type weight $n$ would then
be divisible by $d$, contrary to $n=qk-1\equiv-1\pmod d$.

Suppose, for a contradiction, that nonnegative real multiplicities $(x_{\lambda})_{\lambda \in \T(n,L)}$ solve
\eqref{eq:type-system}.  By \eqref{eq:total-mass} and
\eqref{eq:type-hitting},
\begin{align*}
  U+\sum_{i\in E}H_i
  &=\sum_\lambda x_\lambda
    \left(\lambda_{k-1}+\sum_{i\in E}\lambda_i\right)\nonumber\\
  &=\sum_\lambda x_\lambda g(\lambda)
    \ge\sum_\lambda x_\lambda=S+U.
\end{align*}
Hence every such solution would imply $\sum_{i\in E}H_i\ge S$. 

We now prove the strict reverse inequality.  Using
\eqref{eq:rejection-partition}, the identity
$H_{c-1}=D_{c-1}+D_c$ for each bridge pair, and
$D_i-H_i=-D_{i+1}$, we obtain
\begin{align}
  S-\sum_{i\in E}H_i
  &=\sum_{c\in C}(D_c+D_{c-1}-H_{c-1})
    +D_h+\sum_{i\in T}(D_i-H_i)\nonumber\\
  &=D_h-\sum_{i\in T}D_{i+1}.
  \label{eq:rejection-gap}
\end{align}
If $T=\varnothing$, this is $D_h>0$.  Otherwise $|T|\le h-2$, and binomial
monotonicity gives $D_{i+1}\le\binom{n-1}{h-2}$ for $i\in T$. 
Furthermore, $\frac{D_h}{\binom{n-1}{h-2}}=\frac{n-h+1}{h-1}$. 
Thus the right-hand side of \eqref{eq:rejection-gap} is positive whenever
\begin{equation}
  n\ge\max\bigl\{2h-2,(|T|+1)(h-1)+1\bigr\}.
  \label{eq:rejection-size-condition}
\end{equation}
Here $2\le h\le k-2$ and $n\ge2k^3-1$, while
\[
  (|T|+1)(h-1)+1\le(h-1)^2+1\le(k-3)^2+1<2k^3-1.
\]
Therefore \eqref{eq:rejection-size-condition} holds. Hence $\sum_{i\in E}H_i<S$, which contradicts $\sum_{i\in E}H_i\ge S$. No nonnegative real
solution of the type equations exists.
\end{proof}

\subsection{Completion of the main theorem and proof of the corollary}

We are now in a position to complete the entire proof.

\begin{proof}[Proof of Theorem~\ref{thm:main}]
    The gcd-peeling algorithm terminates by Lemma~\ref{lem:state-invariants}. The accepting and rejecting cases are dealt with exhaustively by Theorems \ref{thm:proof1} and \ref{thm:proof2}, respectively, which establishes Theorem~\ref{thm:main}.
\end{proof}

\begin{proof}[Proof of Corollary~\ref{cor:hhm-classification}]
Fix $L$ and choose $N_{\mathrm{old}}(L)$ so that all
three conclusions of Proposition~\ref{prop:old-reduction} hold.  Set $N_0(L):=\max\{N_{\mathrm{old}}(L),\,2k^3-1\}$. 
Let $n\ge N_0(L)$.  The residue class $0\pmod{k}$ is settled by
Proposition~\ref{prop:old-reduction}(1), and all classes other than $0$ and $-1$ are
excluded by Proposition~\ref{prop:old-reduction}(2).  In the remaining class write $n=qk-1$ and $q=\frac{n+1}{k}\ge2k^2$. 

If $k-1\notin L$, Proposition~\ref{prop:old-reduction}(3) also excludes the
class $-1\pmod{k}$, so only the class $0\pmod{k}$ remains; this proves
assertion~{\rm(i)}.  Now suppose $k-1\in L$.  If the algorithm accepts, then
Theorem~\ref{thm:main} supplies a factorization throughout the class
$-1\pmod{k}$ above the threshold, and Proposition~\ref{prop:old-reduction}
supplies the class $0\pmod{k}$ while excluding all others.  This is
alternative~{\rm(ii)} and proves assertion~{\rm(ii)}.  If the algorithm
rejects, Theorem~\ref{thm:main} excludes the class $-1\pmod{k}$ as well, so
alternative~{\rm(i)} holds, proving assertion~{\rm(iii)}.  Since $k\ge2$, the
two alternatives are distinct, and the corollary follows.
\end{proof}

\section*{Acknowledgments}

The authors would like to thank Professor Bo Ning for a careful reading of the manuscript and for many helpful suggestions.

\section*{Declaration on the use of generative AI}

The authors used generative AI tools to assist in discussing proof strategies, checking proofs, and
improving the exposition. The authors take full responsibility for the mathematical arguments, results,
and conclusions, all of which were carefully reviewed and verified by them.

\hypersetup{urlcolor=blue}
\begingroup
\setlength{\parindent}{0pt}

\textbf{Guorong Gao}\\
School of Mathematics and Statistics\\
Fuzhou University\\
Fuzhou\\
China\\
E-mail: \href{mailto:grgao@fzu.edu.cn}{grgao@fzu.edu.cn}

\bigskip
\textbf{Mingze Li}\\
School of Mathematical Sciences\\
University of Science and Technology of China\\
Hefei, Anhui 230026, China\\
E-mail: \href{mailto:lmz10@mail.ustc.edu.cn}{lmz10@mail.ustc.edu.cn}

\endgroup

\end{document}